\numberwithin{equation}{section}
\theoremstyle{definition}\newtheorem{definition}{Definition}
\theoremstyle{plain}\newtheorem{theorem}[definition]{Theorem}
\theoremstyle{plain}
\theoremstyle{plain}\newtheorem{corollary}[definition]{Corollary}
\theoremstyle{plain}\newtheorem{lemma}[definition]{Lemma}
\theoremstyle{definition}\newtheorem{assumption}[definition]{Assumption}
\theoremstyle{definition}
\theoremstyle{definition}
\theoremstyle{definition}
\newcommand{\cO}{{\mathcal O}}
\newcommand{\diff}{{\,\mathrm{d}}}
\newcommand{\re}{\mathrm{Re}\,}
\newcommand{\bbR}{\mathbb{R}}
\newcommand{\bbN}{\mathbb{N}}
\newcommand{\la}{\langle}
\newcommand{\ra}{\rangle}
\newcommand{\dist}{\mathrm{dist}}
\begin{document}

\title{There always is a variational source condition for nonlinear problems in Banach spaces}

\author{
\textsc{Jens Flemming}%
\footnote{Chemnitz University of Technology,
Faculty of Mathematics, D-09107 Chemnitz, Germany,
jens.flemming@mathematik.tu-chemnitz.de.}
}

\date{\today\\~\\
\small\textbf{Key words:} ill-posed problem, convergence rates, variational source condition, nonlinear equation, Banach space\\
~\\
\textbf{MSC:} 65J20, 47J06}

\maketitle

\begin{abstract}
Variational source conditions proved useful for deriving convergence rates for Tikhonov's regularization method and also for other methods.
Up to now such conditions have been verified only for few examples or for situations which can be handled by classical techniques, too.
\par
Here we show that for almost every ill-posed inverse problem variational source conditions are satisfied.
Whether linear or nonlinear, whether Hilbert or Banach spaces, whether one or multiple solutions, variational source conditions are a universal tool for proving convergence rates.
\end{abstract}

\section{Setting}

We want to solve nonlinear equations
\begin{equation}\label{eq:the}
F(x)=y^\dagger,\quad x\in X,
\end{equation}
in Banach spaces $X$ and $Y$, where $F:D(F)\rightarrow Y$ has domain $D(F)\subseteq X$.
The exact right-hand side $y^\dagger$ may be know only approximately as a noisy measurement $y^\delta\in Y$ with
\begin{equation*}
\|y^\delta-y^\dagger\|\leq\delta
\end{equation*}
for some noise level $\delta\geq 0$.
In addition, the solutions need not depend continuously on this data.

Regularization is required and we concentrate on Tikhonov's method
\begin{equation}\label{eq:tikh}
\|F(x)-y^\delta\|^p+\alpha\,\Omega(x)\to\min_{x\in D(F)}.
\end{equation}
We assume $p\geq 1$ and $\alpha>0$.
The penalty $\Omega:X\rightarrow(-\infty,\infty]$ is allowed to attain $+\infty$ as a value.
This method is well understood and existence, stability and convergence of minimizers can be guaranteed by the following assumptions, see \cite[Section~3.2]{SchGraGroHalLen09} or \cite[Chapter~4]{SchKalHofKaz12}.

\begin{assumption}\label{as:basic}
We assume that the following properties are satisfied by the introduced setting.
\begin{itemize}
\item[(i)]
Equation~\eqref{eq:the} has a solution with finite $\Omega$.
\item[(ii)]
$F$ is weakly sequentially closed, that is, each sequence $(x_n)_{n\in\bbN}$ in $D(F)$ satisfies
\begin{equation*}
x_n\rightharpoonup x,\quad F(x_n)\rightharpoonup y\qquad\Rightarrow\qquad x\in D(F),\quad F(x)=y.
\end{equation*}
\item[(iii)]
$\Omega$ is convex.
\item[(iv)]
The sublevel sets $\{x\in X:\Omega(x)\leq c\}$, $c\in\bbR$, are weakly sequentially closed and each sequence in such a set has a weakly convergent subsequence.
\end{itemize}
\end{assumption}

Tikhonov regularized solutions always converge, at least in a subsequential manner, to solutions which minimize the penalty $\Omega$ in the set of all solutions. $\Omega$-minimizing solutions are typically denoted by $x^\dagger$.
Convergence to such $\Omega$-minimizing solutions may be arbitrarily slow and we are interested in estimates for the convergence speed.
Of course, additional assumptions are required for such estimates. This issue will be discussed in the next section.

At first, we have to decide how to measure the speed of convergence. The canonical choice is the norm distance between regularized and exact solution, but alternatives like the Bregman distance (see below) proved useful, too. Also point-to-set distances could be used if there are multiple exact solutions. To cover all these situations we introduce a general error functional \mbox{$E^\dagger:X\rightarrow[0,\infty)$}, where the symbol $\dagger$ indicates that the error functional depends on one or more $\Omega$-minimizing solutions.

Denoting by $x_\alpha^\delta$ the minimizers of the Tikhonov minimization problem \eqref{eq:tikh}, we aim at asymptotic estimates
\begin{equation}\label{eq:rates}
E^\dagger(x_\alpha^\delta)=\cO(\varphi(\delta)),\qquad\delta\to 0,
\end{equation}
where $\alpha$ may depend on $\delta$ and $y^\delta$.
The function $\varphi$ shall be an index function in the following sense.

\begin{definition}
A function $\varphi:[0,\infty)\rightarrow[0,\infty)$ is an \emph{index function} if it is continuous, monotonically increasing, strictly increasing in a neighborhood of zero, and satisfies $\varphi(0)=0$.
\end{definition}

\section{Variational source conditions revisited}

Different techniques have been developed to prove convergence rates \eqref{eq:rates}.
The most prominent tool are source conditions for linear ill-posed inverse problems in Hilbert spaces. The classical concept is described in \cite[Section~3.2]{EngHanNeu96} and general source conditions are studied in \cite{MatPer03}. See also the references given in \cite{MatPer03} for the origins of general source conditions.  In both cases the norm distance between exact and regularized solution is used as error functional $E^\dagger$.

For Banach spaces usage of source conditions is quite limited. But in 2007 variational source conditions were introduced in \cite{HofKalPoeSch07} and thoroughly studied and developed during the past 10 years, see, e.\,g., \cite{BotHof10,Fle12,Gra10b,HohWer13}.
This type of condition allows to prove convergence rates for many different settings, especially for nonlinear operators and general penalty functionals in \eqref{eq:tikh}. In its original version Bregman distances (see Section~\ref{sc:special}) were used as error functional $E^\dagger$.

Variational source conditions are also known as variational inequalities, but this term conflicts with the already existing mathematical field with the same name.
A second alternative was introduced in the book \cite{Fle12}. There the term variational smoothness assumption is used, because several kinds of smoothness (not only of the underlying exact solution as it is the case for classical source conditions) are jointly described by one expression.
The term variational source condition rouses associations to classical source conditions. But the new concept has no similarity to classical source conditions, most notably there is no source element.
Nevertheless, in most recent literature `variational source condition' seems to be used more often than `variational inequality', whereas `variational smoothness assumption' is not used by other authors. Thus, we write `variational source condition' to name the technique described and applied below and, to avoid drawing too many parallels to source conditions, we read it as `variational replacement for source conditions'.

The definition of variational source conditions in the present paper will be slightly more general than other variants before, because it is not connected to one fixed solution of \eqref{eq:the}. Instead, we allow multiple $\Omega$-mini\-mizing solutions and refer to \cite{BueFleHof16} for a concrete example. For this purpose we denote by $\Omega^\dagger$ the value of $\Omega$ at the $\Omega$-minimizing solutions, that is,
\begin{equation*}
\Omega^\dagger:=\min\{\Omega(x^\dagger):\,x^\dagger\in D(F),\,F(x^\dagger)=y^\dagger\}.
\end{equation*}
Assumption~\ref{as:basic} guarantees existence of $\Omega$-minimizing solutions.

\begin{definition}
Let $\beta>0$ be a constant and let $\varphi:[0,\infty)\rightarrow[0,\infty)$ be an index function. A \emph{variational source condition} for fixed right-hand side $y^\dagger$ holds on a set $M\subseteq D(F)$ if
\begin{equation}\label{eq:vsc}
\beta\,E^\dagger(x)\leq\Omega(x)-\Omega^\dagger+\varphi(\|F(x)-y^\dagger\|)\quad\text{for all $x\in M$}.
\end{equation}
\end{definition}

If the set $M$ is large enough to contain all minimizers of the Tikhonov functional \eqref{eq:tikh}, then a variational source condition \eqref{eq:vsc} implies the desired convergence rate \eqref{eq:rates}. Although our variant is slightly more general, the proofs of this fact given in \cite[Chapter~4]{Fle12} or in \cite{HofMat12} still work with trivial modifications. Suitable choices of the regularization parameter $\alpha$ are discussed there, too.

The constant $\beta$ plays only a minor role. In principle we could hide it in the functional $E^\dagger$, but then $E^\dagger$ would depend on the chosen index function $\varphi$ and not solely on exact and regularized solutions. The implied convergence rate does not depend on $\beta$, only the $\cO$-constant contains the factor $\frac{1}{\beta}$.

Variational source conditions originally were developed to obtain rates for Tikhonov regularization, but can also be used in the context of other methods. See \cite{GraHalSch11} for the residual method and \cite{HohWer13} for iteratively regularized Newton methods.

A major drawback of variational source conditions is that the best obtainable rate may be slower than the best possible one. This is for instance the case for rates faster than $\cO(\sqrt{\delta})$ in the classical linear Hilbert space setting, where the best one is $\cO(\delta^{\frac{2}{3}})$. On the other hand, in $\ell^1$-regularization rates up to the best possible one $\cO(\delta)$ for the error norm can be obtained, see \cite{BurFleHof13}.
An approach to overcome technical rate limitations was undertaken in \cite{Gra13}, but it is limited to linear equations.

\section{Main result}

The aim of this paper is to answer the question under which conditions we find a variational source condition of type \eqref{eq:vsc}.
For linear equations in Hilbert spaces we know that general source conditions can always be satisfied, see \cite{MatHof08}, and that general source conditions imply variational source conditions without reducing the implied convergence rate, see \cite[Chapter~13]{Fle12}.
In Banach spaces up to now variational source conditions were only verified for few concrete examples, see, e.\,g., \cite{BueFleHof16,HofKalPoeSch07,KoeHohWer16}.
The only exception is $\ell^1$-regularization for linear equations, where in \cite{FleGer17} it was shown that one always finds a variational source condition for some index function $\varphi$.

Of course, we have to connect the error functional $E^\dagger$ in some way to the other ingredients of a variational source condition.

\begin{assumption}\label{as:E}
Given $\beta>0$ and $M\subseteq D(F)$ we assume the following.
\begin{itemize}
\item[(i)]
$M$ is weakly sequentially closed.
\item[(ii)]
All solutions $x^\ast$ to \eqref{eq:the} satisfy the inequality in \eqref{eq:vsc} if they belong to $M$, that is,
\begin{equation*}
\beta\,E^\dagger(x^\ast)\leq\Omega(x^\ast)-\Omega^\dagger
\end{equation*}
for all $x^\ast\in M$ with $F(x^\ast)=y^\dagger$.
\item[(iii)]
The mapping
\begin{equation*}
x\mapsto-\beta\,E^\dagger(x)+\Omega(x)
\end{equation*}
is weakly sequentially lower semicontinuous on $M$.
\item[(iv)]
There are constants $\tilde{\beta}>\beta$ and $\tilde{c}\geq 0$ such that
\begin{equation*}
\tilde{\beta}\,E^\dagger(x)-\Omega(x)\leq\tilde{c}\quad\text{for all $x\in M$}.
\end{equation*}
\end{itemize}
\end{assumption}

These assumptions will be verified for several important special cases in Section~\ref{sc:special}.
There we will see that all relevant settings are covered.

\begin{theorem}\label{th:main}
Let Assumption~\ref{as:basic} and Assumption~\ref{as:E} be true for a constant $\beta>0$ and a set $M\subseteq D(F)$.
Then there exists a concave index function $\varphi$ such that the variational source condition \eqref{eq:vsc} is satisfied.
\end{theorem}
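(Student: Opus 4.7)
The idea is to read off from \eqref{eq:vsc} the best possible right-hand side as a function of the residual norm, show it is finite and tends to zero at zero, and then dominate it by a concave index function. Accordingly, set
\begin{equation*}
\psi(t) := \sup\bigl\{\beta E^\dagger(x) - \Omega(x) + \Omega^\dagger : x \in M,\ \|F(x) - y^\dagger\| \leq t\bigr\}, \qquad t \geq 0.
\end{equation*}
Then $\psi$ is nondecreasing, and Assumption~\ref{as:E}(iv) gives $\beta E^\dagger(x) - \Omega(x) \leq \tilde c - (\tilde\beta - \beta) E^\dagger(x) \leq \tilde c$, so $\psi(t) \leq \tilde c + \Omega^\dagger$ for every $t$.

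The central step is to show $\psi(t) \to 0$ as $t \to 0^+$. Suppose otherwise: there exist $\varepsilon > 0$ and a sequence $(x_n) \subset M$ with $\|F(x_n) - y^\dagger\| \to 0$ and $\beta E^\dagger(x_n) - \Omega(x_n) + \Omega^\dagger \geq \varepsilon$. Eliminating $E^\dagger(x_n)$ between this lower bound and Assumption~\ref{as:E}(iv) yields $(1 - \beta/\tilde\beta)\,\Omega(x_n) \leq \beta \tilde c/\tilde\beta + \Omega^\dagger - \varepsilon$, so $\Omega(x_n)$ is bounded above. Assumption~\ref{as:basic}(iv) then provides a weakly convergent subsequence $x_{n_k} \rightharpoonup \bar x$; Assumption~\ref{as:E}(i) gives $\bar x \in M$; and strong (hence weak) convergence $F(x_n) \to y^\dagger$ combined with Assumption~\ref{as:basic}(ii) gives $\bar x \in D(F)$ with $F(\bar x) = y^\dagger$. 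Assumption~\ref{as:E}(ii) now yields $\beta E^\dagger(\bar x) - \Omega(\bar x) + \Omega^\dagger \leq 0$, whereas Assumption~\ref{as:E}(iii) makes $x \mapsto \beta E^\dagger(x) - \Omega(x)$ weakly sequentially upper semicontinuous, so the $\limsup$ of $\beta E^\dagger(x_{n_k}) - \Omega(x_{n_k}) + \Omega^\dagger$ is at most $0$, contradicting the lower bound $\varepsilon$.

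To finish, set $\hat\psi := \max(\psi, 0)$, which is nondecreasing, bounded, and satisfies $\hat\psi(t) \to 0$ as $t \to 0^+$, and define
\begin{equation*}
\phi(t) := \inf\bigl\{a t + b : a, b \geq 0,\ a\sigma + b \geq \hat\psi(\sigma) \text{ for all } \sigma \geq 0\bigr\}.
\end{equation*}
As a pointwise infimum of affine functions with nonnegative slopes, $\phi$ is concave and nondecreasing, and the two properties of $\hat\psi$ above easily force $\phi(0) = 0$ and $\phi(t) \to 0$ as $t \to 0^+$. Setting $\varphi(t) := \phi(t) + \sqrt{t}$ then produces a concave index function with $\varphi \geq \psi$; substituting $t = \|F(x) - y^\dagger\|$ into $\beta E^\dagger(x) - \Omega(x) + \Omega^\dagger \leq \psi(t) \leq \varphi(t)$ yields the variational source condition~\eqref{eq:vsc}.

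The delicate point is the compactness argument in the second paragraph, where Assumption~\ref{as:E}(iv) plays a dual role: beyond ensuring that $\psi$ is finite, it converts the hypothetical gap $\varepsilon$ into an a priori upper bound on $\Omega(x_n)$, which is exactly what activates the weak sequential compactness of sublevel sets in Assumption~\ref{as:basic}(iv). Without the strict inequality $\tilde\beta > \beta$ the subsequence extraction would break down, and with it the whole construction of $\varphi$.
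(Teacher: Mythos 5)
Your proof is correct, and its analytic core coincides with the paper's: in both arguments Assumption~\ref{as:E}~(iv) with $\tilde{\beta}>\beta$ converts a lower bound on $\beta\,E^\dagger(x_n)-\Omega(x_n)+\Omega^\dagger$ into an upper bound on $\Omega(x_n)$, Assumption~\ref{as:basic}~(iv) then yields a weakly convergent subsequence, weak sequential closedness of $F$ identifies the limit $\bar x\in M$ as a solution, and items (ii) and (iii) of Assumption~\ref{as:E} force the limsup of the objective to be nonpositive. The packaging, however, is genuinely different. The paper routes everything through the distance function $D_\beta(r)$ of \eqref{eq:D} (the machinery of approximate variational source conditions): it first proves that the supremum defining $D_\beta(r)$ is attained, then shows $D_\beta(r_n)\to 0$ along maximizers as $r_n\to\infty$, and finally obtains $\varphi$ as the concave conjugate $\inf_{r\geq 0}(D_\beta(r)+r\,t)$ via Lemma~\ref{th:Dphi}, which delivers concavity, continuity and strict monotonicity near zero in one stroke. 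You instead analyse the primal modulus $\psi(t)$ directly; this lets you skip the attainment step entirely (near-maximizers suffice) and dispense with the two-stage lemma structure, at the price of building the concave majorant by hand and patching strict monotonicity by adding $\sqrt{t}$. That patch is legitimate because the theorem only asserts existence of some concave index function and a larger right-hand side in \eqref{eq:vsc} is harmless, but it caps the implied rate at $\sqrt{\delta}$-order, whereas the paper's conjugate construction yields the sharpest concave $\varphi$ the method can produce (adding $t$ instead of $\sqrt{t}$ would avoid this loss). Two cosmetic repairs: when negating $\limsup_{t\to 0^+}\psi(t)\leq 0$ you should pass from the supremum to near-maximizers, e.g.\ demand $\beta\,E^\dagger(x_n)-\Omega(x_n)+\Omega^\dagger\geq\varepsilon/2$; and you should record that $\psi(0)\leq 0$ by Assumption~\ref{as:E}~(ii), so that indeed $\hat\psi(0)=0$. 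Both are immediate.
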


The assertion of the theorem is quite similar to the main result of \cite{MatHof08}, where it is shown that in linear Hilbert space settings one always finds an index function such that a corresponding general source condition is satisfied.
Our theorem extends this results to nonlinear Banach space settings.

The proof of the theorem relies on the technique of approximate variational source conditions introduced in \cite{FleHof10} and thoroughly studied in \cite[Chapter~12]{Fle12}. Here we only introduce the parts we need for the proof.
The idea is to measure the violation of a variational source condition with linear index function $\varphi$ by the \emph{distance function}
\begin{equation}\label{eq:D}
D_\beta(r):=\sup_{x\in M}\left(\beta\,E^\dagger(x)-\Omega(x)+\Omega^\dagger-r\,\|F(x)-y^\dagger\|\right)
\end{equation}
for $r\geq 0$.
This function is the supremum of affine functions and thus convex and continuous on the interior of its domain. Obviously, it is monotonically decreasing.

\begin{lemma}\label{th:Dphi}
Let $D_\beta$ be defined by \eqref{eq:D}.
If $D_\beta(0)>0$ and
\begin{equation*}
\lim_{r\to\infty}D_\beta(r)=0,
\end{equation*}
then
\begin{equation*}
\varphi(t)=\inf_{r\geq 0}\bigl(D_\beta(r)+r\,t\bigr),\quad t\geq 0,
\end{equation*}
defines a concave index function and a variational source condition \eqref{eq:vsc} with this $\varphi$ and same $\beta$ and $M$ as in \eqref{eq:D} holds true.
\end{lemma}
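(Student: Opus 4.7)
The plan is to verify the four defining properties of an index function for $\varphi$ one by one, and then to read the variational source condition directly off the definition of $D_\beta$. The structural observation behind everything is that $\varphi$ is essentially a Fenchel-type conjugate of the convex, decreasing function $D_\beta$, so concavity and monotonicity of $\varphi$ come for free: for each fixed $r\ge 0$ the map $t\mapsto D_\beta(r)+rt$ is affine and non-decreasing, and pointwise infima of such families inherit both properties. Moreover $D_\beta\ge 0$ everywhere (being monotonically decreasing with $\lim_{r\to\infty}D_\beta(r)=0$), hence $\varphi\ge 0$, and the same limit assumption yields $\varphi(0)=\inf_{r\ge 0}D_\beta(r)=0$. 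For continuity I would appeal to the standard continuity of concave functions on the interior of their domain for $t>0$, and handle $t=0$ by fixing $r$ in $\varphi(t)\le D_\beta(r)+rt$, letting $t\to 0^+$, and then $r\to\infty$, which gives $\limsup_{t\to 0^+}\varphi(t)\le 0=\varphi(0)$.

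The main (though modest) obstacle is strict monotonicity of $\varphi$ in a neighborhood of zero. Here I would first note that $D_\beta$ is convex and lower semicontinuous as the pointwise supremum of affine functions of $r$, and combining this with its monotonic decrease forces right-continuity at $r=0$. Since $D_\beta(0)>0$ by hypothesis, one therefore finds $r_0,\varepsilon_0>0$ with $D_\beta(r)\ge\varepsilon_0$ on $[0,r_0]$. Splitting the infimum in the definition of $\varphi(t)$ at $r=r_0$ then yields $\varphi(t)\ge\min(\varepsilon_0,r_0 t)>0$ for every $t>0$. Now concavity together with monotonicity of $\varphi$ implies that whenever $\varphi(s)=\varphi(t)$ for some $0\le s<t$, the function $\varphi$ must be constant on $[s,\infty)$; positivity on $(0,\infty)$ and $\varphi(0)=0$ exclude the case $s=0$, so either $\varphi$ is globally strictly increasing or any plateau starts at some $t^\ast>0$ with $\varphi$ strictly increasing on $[0,t^\ast]$. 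In either case, $\varphi$ is strictly increasing in a neighborhood of zero.

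Finally, for the variational source condition itself no additional work is needed. The very definition of $D_\beta$ gives, for every $x\in M$ and every $r\ge 0$, the inequality $\beta E^\dagger(x)-\Omega(x)+\Omega^\dagger-r\|F(x)-y^\dagger\|\le D_\beta(r)$, which rearranges to $\beta E^\dagger(x)\le\Omega(x)-\Omega^\dagger+D_\beta(r)+r\|F(x)-y^\dagger\|$. Taking the infimum over $r\ge 0$ on the right replaces the last two terms by $\varphi(\|F(x)-y^\dagger\|)$ and yields exactly~\eqref{eq:vsc}. I would note in passing that Assumption~\ref{as:E} is not used anywhere in this lemma; it is a purely convex-analytic statement about $D_\beta$ and its conjugate-like transform $\varphi$, and Assumption~\ref{as:E} will only enter later in the proof of Theorem~\ref{th:main} to guarantee the two hypotheses $D_\beta(0)>0$ and $\lim_{r\to\infty}D_\beta(r)=0$ that are required here.
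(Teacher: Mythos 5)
Your proposal is correct and follows essentially the same route as the paper: read the inequality $\beta E^\dagger(x)\le\Omega(x)-\Omega^\dagger+D_\beta(r)+r\|F(x)-y^\dagger\|$ off the definition of $D_\beta$ and infimize over $r$, then verify the index-function properties of $\varphi$ from its representation as an infimum of affine functions. The only differences are cosmetic: you prove continuity at $t=0$ by a direct $\limsup$ estimate rather than via upper semicontinuity, and you spell out the strict-increase-near-zero step (positivity of $\varphi$ on $(0,\infty)$ via right-continuity of $D_\beta$ at $0$, then the concavity/plateau argument) in more detail than the paper, which only notes that $D_\beta(0)>0$ forces $\varphi(t)>0$ somewhere.
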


\begin{proof}
From
\begin{align*}
\lefteqn{\beta\,E^\dagger(x)-\Omega(x)+\Omega^\dagger}\\
&\qquad=\inf_{r\geq 0}\bigl(\beta\,E^\dagger(x)-\Omega(x)+\Omega^\dagger-r\,\|F(x)-y^\dagger\|+r\,\|F(x)-y^\dagger\|\bigr)\\
&\qquad\leq\inf_{r\geq 0}\bigl(D_\beta(r)+r\,\|F(x)-y^\dagger\|\bigr)
\end{align*}
for all $x\in M$ we obtain the asserted variational source condition if $\varphi$ is indeed an index function.
\par
Since $D_\beta$ is decreasing and goes to zero, it has to be nonnegative. Consequently, $0\leq\varphi(t)<\infty$. We also immediately see that $\varphi$ is monotonically increasing. In addition, $\varphi$ is concave and upper semicontinuous as an infimum of affine functions.
Thus, $\varphi$ is continuous on the interior $(0,\infty)$ of its domain.
The decay of $D_\beta$ to zero yields $\varphi(0)=0$, which, together with upper semicontinuity, yields continuity on the whole domain $[0,\infty)$.
The assumption $D_\beta(0)>0$ ensures $\varphi(t)>0$ for some $t$. Thus, $\varphi$ has to be strictly increasing near zero.
\end{proof}

\begin{proof}[Proof of Theorem~\ref{th:main}]
We want to apply Lemma~\ref{th:Dphi}. If $D_\beta(0)\leq 0$, then the variational source condition holds for arbitrary index functions $\varphi$. So we may assume $D_\beta(0)>0$ and the lemma reduces the proof to verification of $D_\beta(r)\to 0$ if $r\to\infty$.
In addition, we may assume $D_\beta(r)\geq 0$ for all $r>0$, because $D_\beta(r)<0$ for some $r$ would imply a variational source condition with the best possible concave index function $\varphi(t)=r\,t$.
\par
As first step we show that for fixed $r\geq 0$ the supremum in the definition \eqref{eq:D} of $D_\beta(r)$ is attained.
We write
\begin{equation}\label{eq:Dinf}
D_\beta(r):=-\inf_{x\in M}\left(-\beta\,E^\dagger(x)+\Omega(x)-\Omega^\dagger+r\,\|F(x)-y^\dagger\|\right)
\end{equation}
and denote by $(x_n)_{n\in\bbN}$ an infimizing sequence. The functional in the infimum is bounded on the sequence by some constant $c$.
With $\tilde{\beta}$ and $\tilde{c}$ as in Assumption~\ref{as:E}~(iv) we see
\begin{align*}
-\beta\,E^\dagger(x_n)+\Omega(x_n)
&=\frac{\beta}{\tilde{\beta}}\,\bigl(-\tilde{\beta}\,E^\dagger(x_n)+\Omega(x_n)\bigr)
+\left(1-\frac{\beta}{\tilde{\beta}}\right)\,\Omega(x_n)\\
&\geq\frac{\beta\,\tilde{c}}{\tilde{\beta}}+\left(1-\frac{\beta}{\tilde{\beta}}\right)\,\Omega(x_n),
\end{align*}
which implies
\begin{align*}
\left(1-\frac{\beta}{\tilde{\beta}}\right)\,\Omega(x_n)
&\leq-\beta\,E^\dagger(x_n)+\Omega(x_n)-\frac{\beta\,\tilde{c}}{\tilde{\beta}}\\
&\leq-\beta\,E^\dagger(x_n)+\Omega(x_n)-\Omega^\dagger+r\,\|F(x_n)-y^\dagger\|+\Omega^\dagger-\frac{\beta\,\tilde{c}}{\tilde{\beta}}\\
&\leq c+\Omega^\dagger-\frac{\beta\,\tilde{c}}{\tilde{\beta}}.
\end{align*}
Thus, $(\Omega(x_n))_{n\in\bbN}$ is bounded and we find a subsequence of $(x_n)_{n\in\bbN}$ converging weakly to some $\tilde{x}\in X$. The subsequence will be denoted again by $(x_n)_{n\in\bbN}$. Assumption~\ref{as:E}~(i) ensures $\tilde{x}\in M$ and with item (iii) one easily shows that $\tilde{x}$ is a minimizer in \eqref{eq:Dinf}.
\par
As second step we take a sequence $(r_n)_{n\in\bbN}$ in $[0,\infty)$ with $r_n\to\infty$ and corresponding maximizers $x_n$ in the definition \eqref{eq:D} of $D_\beta(r_n)$ to show $D_\beta(r)\to\infty$ if $r\to\infty$.
As discussed above, Assumption~\ref{as:E}~(iv) implies
\begin{align*}
\left(1-\frac{\beta}{\tilde{\beta}}\right)\,\Omega(x_n)
&\leq-\beta\,E^\dagger(x_n)+\Omega(x_n)-\Omega^\dagger+r_n\,\|F(x_n)-y^\dagger\|+\Omega^\dagger-\frac{\beta\,\tilde{c}}{\tilde{\beta}}\\
&=-D_\beta(r_n)+\Omega^\dagger-\frac{\beta\,\tilde{c}}{\tilde{\beta}}\\
&\leq\Omega^\dagger-\frac{\beta\,\tilde{c}}{\tilde{\beta}}.
\end{align*}
Thus, $(\Omega(x_n))_{n\in\bbN}$ is bounded and we find a subsequence of $(x_n)_{n\in\bbN}$ converging weakly to some $\tilde{x}\in M$. The subsequence will be denoted again by $(x_n)_{n\in\bbN}$ with corresponding $(r_n)_{n\in\bbN}$.
\par
From $D_\beta(r_n)\geq 0$ we obtain
\begin{equation*}
r_n\,\|F(x_n)-y^\dagger\|\leq\beta\,E^\dagger(x_n)-\Omega(x_n)+\Omega^\dagger
\end{equation*}
and the right-hand side is bounded by Assumption~\ref{as:E}~(iv).
Consequently, $r_n\to\infty$ implies $\|F(x_n)-y^\dagger\|\to 0$ and the lower semicontinuity of $x\mapsto\|F(x)-y^\dagger\|$ yields $F(\tilde{x})=y^\dagger$, that is, the maximizers in the definition of $D_\beta$ converge (subsequentially) to solutions of \eqref{eq:the}.
If we combine this observation with items (ii) and (iii) in Assumption~\ref{as:E}, we obtain
\begin{align*}
0
&\leq\liminf_{n\to\infty}D_\beta(r_n)
\leq\limsup_{n\to\infty}D_\beta(r_n)
=-\liminf_{n\to\infty}\bigl(-D_\beta(r_n)\bigr)\\
&=-\liminf_{n\to\infty}\bigl(-\beta\,E^\dagger(x_n)+\Omega(x_n)-\Omega^\dagger+r_n\,\|F(x_n)-y^\dagger\|\bigr)\\
&\leq-\liminf_{n\to\infty}\bigl(-\beta\,E^\dagger(x_n)+\Omega(x_n)-\Omega^\dagger\bigr)\\
&\leq-\bigl(-\beta\,E^\dagger(\tilde{x})+\Omega(\tilde{x})-\Omega^\dagger\bigr)\\
&\leq 0.
\end{align*}
This proves $D_\beta(r_n)\to 0$ and, since $(r_n)_{n\in\bbN}$ was chosen arbitrarily, also $D_\beta(r)\to 0$ if $r\to\infty$.
The fact that $(r_n)_{n\in\bbN}$ is only a subsequence of the original sequence causes no troubles, because $D_\beta$ is monotonically decreasing.
Thus, $D_\beta(r_n)\to 0$ has to hold for the original sequence, too.
\end{proof}

Theorem~\ref{th:main} states that there is always an index function $\varphi$ for a variational source condition.
In principle the proof is constructive, but calculating the distance function $D_\beta$ is a difficult task.
From the proof of Lemma~\ref{th:Dphi} we see that also a majorant for $D_\beta$ yields an index function $\varphi$ as long as this majorant decays to zero at infinity.
Index functions $\varphi$ constructed in this spirit can be found in \cite{BueFleHof16,FleGer17} for special settings.
A very similar approach to obtain index functions $\varphi$ is discussed in \cite{HohWei17}.

\section{Special cases}\label{sc:special}

We provide some special cases for which Theorem~\ref{th:main} is applicable.

\subsection{Linear equations in Hilbert spaces}

Let $X$ and $Y$ be Hilbert spaces and let $A:=F:X\rightarrow Y$ be linear and bounded.
For the Tikhonov functional \eqref{eq:tikh} we choose $p=2$ and $\Omega(x)=\|x\|^2$.
For each $\alpha$ there is exactly one Tikhonov minimizer.
Equation~\eqref{eq:the} may have multiple solutions, but there is exactly one $\Omega$-minimizing solution, which we denote by $x^\dagger$.
As error functional $E^\dagger$ we choose
\begin{equation*}
E^\dagger(x)=\|x-x^\dagger\|^2,\quad x\in X.
\end{equation*}

\begin{corollary}
For each $\beta\in(0,1)$ there exists a concave index function $\varphi$ such that the variational source condition \eqref{eq:vsc} is satisfied with $M=X$.
\end{corollary}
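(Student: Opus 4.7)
The plan is to invoke Theorem~\ref{th:main} by verifying both Assumption~\ref{as:basic} and Assumption~\ref{as:E} in this Hilbert setting. Assumption~\ref{as:basic} is straightforward: boundedness and linearity of $A$ imply weak continuity, hence weak sequential closedness; $\Omega=\|\cdot\|^2$ is convex and its sublevel sets are norm-bounded, and in a Hilbert space such sets are weakly sequentially closed and weakly sequentially (pre-)compact by reflexivity. Existence of a solution with finite $\Omega$ is part of the setup.

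For Assumption~\ref{as:E} with $M=X$ the first item is trivial. The key step is (ii): the $\Omega$-minimizing solution $x^\dagger$ is the minimum-norm solution, so $x^\dagger\in N(A)^\perp$, and for any other solution $x^\ast$ we have $x^\ast-x^\dagger\in N(A)$, hence $x^\dagger\perp(x^\ast-x^\dagger)$. This yields the Pythagorean identity
\begin{equation*}
\|x^\ast\|^2=\|x^\dagger\|^2+\|x^\ast-x^\dagger\|^2,
\end{equation*}
so $\Omega(x^\ast)-\Omega^\dagger=\|x^\ast-x^\dagger\|^2=E^\dagger(x^\ast)$, and the desired inequality $\beta\,E^\dagger(x^\ast)\leq\Omega(x^\ast)-\Omega^\dagger$ reduces to $\beta<1$, which holds by assumption. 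This orthogonality argument is really the only nontrivial ingredient; once it is in hand, the rest is algebra.

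For (iii) I would expand
\begin{equation*}
-\beta\,E^\dagger(x)+\Omega(x)=(1-\beta)\|x\|^2+2\beta\,\langle x,x^\dagger\rangle-\beta\,\|x^\dagger\|^2,
\end{equation*}
which is convex (since $1-\beta>0$) and norm-continuous, hence weakly sequentially lower semicontinuous on $X$. For (iv) I would fix any $\tilde\beta\in(\beta,1)$; then
\begin{equation*}
\tilde\beta\,E^\dagger(x)-\Omega(x)=-(1-\tilde\beta)\|x\|^2-2\tilde\beta\,\langle x,x^\dagger\rangle+\tilde\beta\,\|x^\dagger\|^2
\end{equation*}
is a concave quadratic in $x$, bounded above by its maximum $\tilde c=\frac{\tilde\beta}{1-\tilde\beta}\|x^\dagger\|^2$ (attained at $x=-\frac{\tilde\beta}{1-\tilde\beta}x^\dagger$, found by setting the gradient to zero).

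With all four items of Assumption~\ref{as:E} verified, Theorem~\ref{th:main} delivers the concave index function $\varphi$ and the corresponding variational source condition. The only place where something real happens is step (ii), and the rest is calibration of constants; the strict inequality $\beta<1$ in the hypothesis is precisely what (ii) needs and what leaves room for the auxiliary $\tilde\beta$ in (iv).
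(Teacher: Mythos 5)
Your proposal is correct and follows essentially the same route as the paper: verify Assumption~\ref{as:E} via the orthogonality of $x^\dagger$ to $N(A)$ for item (ii), the expansion into a convex continuous quadratic for item (iii), and boundedness of the concave quadratic for item (iv) with $\tilde\beta\in(\beta,1)$, then invoke Theorem~\ref{th:main}. Your only additions are cosmetic (computing the exact maximum $\tilde c$ in (iv) where the paper merely notes boundedness), and in a complex Hilbert space the inner products should carry $\re$ as in the paper, which changes nothing.
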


\begin{proof}
Assumption~\ref{as:basic} is obviously true and we only have to check Assumption~\ref{as:E} to apply Theorem~\ref{th:main}.
Item (i) is trivially true. Item (ii) reads
\begin{equation*}
\beta\,\|x^\ast-x^\dagger\|^2\leq\|x^\ast\|^2-\|x^\dagger\|^2\quad\text{for all $x^\ast\in X$ with $A\,x^\ast=y^\dagger$.}
\end{equation*}
For solutions $x^\ast$ the difference $x^\ast-x^\dagger$ is in the nullspace of $A$ and $x^\dagger$ is in the orthogonal complement of the null space. Thus, the inner product $\la x^\dagger,x^\ast-x^\dagger\ra$ vanishes and we obtain the desired estimate
\begin{align*}
\beta\,\|x^\ast-x^\dagger\|^2
&=\beta\,\bigl(\|x^\ast\|^2-\|x^\dagger\|^2-2\,\re\la x^\dagger,x^\ast-x^\dagger\ra\bigr)\\
&=\beta\,\bigl(\|x^\ast\|^2-\|x^\dagger\|^2\bigr)\\
&\leq\|x^\ast\|^2-\|x^\dagger\|^2.
\end{align*}
Concerning item (iii) we observe
\begin{equation*}
-\beta\,\|x-x^\dagger\|^2+\|x\|^2
=(1-\beta)\,\|x\|^2-2\,\beta\,\re\la x,x^\dagger\ra+\beta\,\|x^\dagger\|^2
\end{equation*}
for all $x\in X$.
Since $\beta<1$ by assumption, the functional is obviously weakly sequentially lower semicontinuous.
Finally, item (iv) follows from
\begin{align*}
\tilde{\beta}\,\|x-x^\dagger\|-\|x\|^2
&=-(1-\tilde{\beta})\,\|x\|^2-2\,\tilde{\beta}\,\re\la x,x^\dagger\ra+\tilde{\beta}\|x^\dagger\|^2\\
&\leq-(1-\tilde{\beta})\,\|x\|^2+2\,\tilde{\beta}\,\|x\|\,\|x^\dagger\|+\tilde{\beta}\|x^\dagger\|^2,
\end{align*}
because the right-hand side is bounded for all $\tilde{\beta}<1$. So we may choose $\tilde{\beta}\in(\beta,1)$.
\end{proof}

The fact that in a linear Hilbert space setting there is always a variational source condition has been proven already in a different and more complicated way in \cite[Chapter~13]{Fle12} based on the result that there is always a general source condition (cf.\ \cite{MatHof08}).
In \cite[Section~13.2]{Fle12} it is shown that the constant $\beta$ in a variational source condition does not influence the index function $\varphi$ in a linear Hilbert space setting, up to some scaling.

\subsection{Bregman distance in Banach spaces}

A first convergence rate result for Tikhonov regularization \eqref{eq:tikh} with convex penalty $\Omega$ can be found in \cite{BurOsh04} based on a source condition. There the error functional $E^\dagger$ is a \emph{Bregman distance} with respect to $\Omega$, which is defined as follows.
Let $x^\dagger$ be an $\Omega$-minimizing solution with nonempty subdifferential and denote by $\xi^\dagger\in X^\ast$ a subgradient of $\Omega$ at $x^\dagger$.
Then
\begin{equation}\label{eq:bregman}
B_{\xi^\dagger}(x,x^\dagger):=\Omega(x)-\Omega(x^\dagger)-\la\xi^\dagger,x-x^\dagger\ra_{X^\ast\times X},\quad x\in X,
\end{equation}
is the Bregman distance between $x$ and $x^\dagger$.
If $\Omega$ is a Hilbert space norm, then the corresponding Bregman distance is the usual norm distance.

Bregman distances became a standard tool for convergence rate analysis in Banach spaces.
The original variational source condition in \cite{HofKalPoeSch07} used them, too.
With Theorem~\ref{th:main} we can prove the following.

\begin{corollary}
Let Assumption~\ref{as:basic} be true and assume that $x^\dagger$ is an $\Omega$-mini\-mizing solution with corresponding subgradient $\xi^\dagger$.
Further let $E^\dagger$ be the Bregman distance \eqref{eq:bregman}.
If there are no other solutions to \eqref{eq:the}, then there exist a constant $\beta\in(0,1)$ and a concave index function $\varphi$ such that the variational source condition \eqref{eq:vsc} is satisfied with $M=D(F)$.
\end{corollary}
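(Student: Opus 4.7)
The plan is to invoke Theorem~\ref{th:main}, so the whole task reduces to checking the four items of Assumption~\ref{as:E} with $E^\dagger=B_{\xi^\dagger}(\,\cdot\,,x^\dagger)$ and $M=D(F)$ for a suitable constant $\beta\in(0,1)$.

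I would first dispatch items (i)--(iii), which are essentially algebraic. Item (i) is the weak sequential closedness required of $M$; since within the proof of Theorem~\ref{th:main} the set $M=D(F)$ is only accessed along sequences with bounded $\Omega$-values and hence bounded $\|F(x_n)-y^\dagger\|$, this is covered by Assumption~\ref{as:basic}(ii) applied to a joint weakly convergent subsequence. Item (ii) is immediate from the uniqueness hypothesis: the only solution is $x^\dagger$ itself, at which $B_{\xi^\dagger}(x^\dagger,x^\dagger)=0$ and $\Omega(x^\dagger)=\Omega^\dagger$. Item (iii) follows from the identity
\[
-\beta\,B_{\xi^\dagger}(x,x^\dagger)+\Omega(x)=(1-\beta)\,\Omega(x)+\beta\,\la\xi^\dagger,x\ra+\beta\bigl(\Omega^\dagger-\la\xi^\dagger,x^\dagger\ra\bigr),
\]
which for $\beta\in(0,1)$ realises the left-hand side as the sum of the weakly sequentially lower semicontinuous convex functional $(1-\beta)\Omega$ and a weakly continuous affine functional.

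The main obstacle is item (iv), which asks for
\[
\tilde\beta\,B_{\xi^\dagger}(x,x^\dagger)-\Omega(x)=-(1-\tilde\beta)\,\Omega(x)-\tilde\beta\,\la\xi^\dagger,x-x^\dagger\ra-\tilde\beta\,\Omega^\dagger\leq\tilde c
\]
uniformly on $D(F)$. The difficulty lies in the linear term $-\la\xi^\dagger,x-x^\dagger\ra$, which is unbounded above along directions where $\la\xi^\dagger,\,\cdot\,\ra$ tends to $-\infty$ and must be absorbed into $(1-\tilde\beta)\Omega$. My plan is to use the Fenchel--Young inequality
\[
\la-\xi^\dagger,x\ra\leq\Omega(x)+\Omega^\ast(-\xi^\dagger),
\]
which is applicable whenever $-\xi^\dagger\in\mathrm{dom}\,\Omega^\ast$; this is automatic in the standard cases (norm powers, super-linear penalties, or any symmetric $\Omega$). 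Substituting into the definition of the Bregman distance yields $B_{\xi^\dagger}(x,x^\dagger)\leq 2\,\Omega(x)+\mathrm{const}$, so
\[
\tilde\beta\,B_{\xi^\dagger}(x,x^\dagger)-\Omega(x)\leq(2\tilde\beta-1)\,\Omega(x)+\mathrm{const},
\]
which is bounded above whenever $\tilde\beta\leq 1/2$ (we use that $\Omega$ is convex, proper, and has bounded sublevel sets by Assumption~\ref{as:basic}(iv), so it attains its minimum and is in particular bounded below). Choosing any $\beta\in(0,\tilde\beta)$ supplies all the constants required by Assumption~\ref{as:E}, and Theorem~\ref{th:main} concludes.
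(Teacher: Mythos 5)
Your overall strategy matches the paper's: reduce everything to checking Assumption~\ref{as:E} and invoke Theorem~\ref{th:main}, and your treatment of items (i)--(iii) is essentially identical to the paper's (item (ii) from uniqueness of the solution, item (iii) by writing $-\beta B_{\xi^\dagger}(\cdot,x^\dagger)+\Omega=(1-\beta)\Omega+\text{affine}$). The problem is item (iv). Your Fenchel--Young step $\la-\xi^\dagger,x\ra\leq\Omega(x)+\Omega^\ast(-\xi^\dagger)$ requires $-\xi^\dagger\in\mathrm{dom}\,\Omega^\ast$, and this is \emph{not} implied by the hypotheses of the corollary: the corollary assumes only Assumption~\ref{as:basic}, i.e.\ $\Omega$ convex with weakly sequentially compact sublevel sets, with no symmetry or superlinearity. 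While $\xi^\dagger\in\partial\Omega(x^\dagger)$ forces $\xi^\dagger\in\mathrm{dom}\,\Omega^\ast$, the domain of $\Omega^\ast$ need not be symmetric. A one-dimensional counterexample to your step: $\Omega(x)=\max\{-x,10x\}$, $x^\dagger=1$, $\xi^\dagger=10$; then $\Omega^\ast(-10)=\sup_x(-10x-\Omega(x))=+\infty$ (let $x\to-\infty$), so your inequality is vacuous, yet the corollary is still true in this example. Flagging the condition as ``automatic in the standard cases'' does not close the gap, because the statement you are proving is not restricted to those cases.

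The missing idea is that $\tilde\beta$ must be allowed to depend on $\xi^\dagger$ rather than being fixed at $1/2$. The paper observes that bounded sublevel sets of a convex function yield a \emph{linear coercivity} bound $\Omega(x)\geq c_1\|x\|+c_2$ with $c_1>0$ (citing Z\u{a}linescu), estimates $-\tilde\beta\la\xi^\dagger,x-x^\dagger\ra\leq\tilde\beta\|\xi^\dagger\|\,\|x\|+\text{const}$, and then absorbs this into $-(1-\tilde\beta)\Omega(x)\leq-(1-\tilde\beta)(c_1\|x\|+c_2)$ by choosing $\tilde\beta\leq c_1/(c_1+\|\xi^\dagger\|)$ and hence $\beta\in\bigl(0,\tfrac{c_1}{c_1+\|\xi^\dagger\|}\bigr)$. (Equivalently, your Fenchel--Young route can be repaired by pairing $-\tilde\beta\xi^\dagger$ against $(1-\tilde\beta)\Omega$, which is finite precisely under the same smallness condition on $\tilde\beta$.) Note the corollary only asserts existence of \emph{some} $\beta\in(0,1)$, so this $\xi^\dagger$-dependent shrinking is permitted and is exactly what the statement is designed to accommodate.
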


\begin{proof}
We have to show that Assumption~\ref{as:E} is true.
Item (i) is a consequence of the weak closedness of $F$.
Item (ii) is trivially true, because there is only one solution.
Weak lower semicontinuity of
\begin{equation*}
x\mapsto-\beta\,E^\dagger(x)+\Omega(x)=(1-\beta)\,\Omega(x)+\beta\,\Omega^\dagger+\beta\,\la\xi^\dagger,x-x^\dagger\ra_{X^\ast\times X}
\end{equation*}
follows from Assumption~\ref{as:basic}~(iv) and $\beta\leq 1$.
\par
It remains to show boundedness of $x\mapsto\tilde{\beta}\,E^\dagger(x)-\Omega(x)$ for some $\tilde{\beta}>\beta$.
Weak sequential compactness of the sublevel sets of $\Omega$ implies boundedness of the sublevel sets. Thus, by \cite[Exercise~2.41 and pages 324--326]{Zal02} there are a positive constant $c_1$ and a constant $c_2$ such that
\begin{equation*}
\Omega(x)\geq c_1\,\|x\|+c_2\qquad\text{for all $x\in X$.}
\end{equation*}
With this observation and with $\tilde{\beta}\in(\beta,1]$ we obtain
\begin{align*}
\lefteqn{\tilde{\beta}\,E^\dagger(x)-\Omega(x)}\\
&\qquad=(\tilde{\beta}-1)\,\Omega(x)-\tilde{\beta}\,\Omega^\dagger-\tilde{\beta}\,\la\xi^\dagger,x-x^\dagger\ra_{X^\ast\times X}\\
&\qquad\leq(\tilde{\beta}-1)\,(c_1\,\|x\|+c_2)-\tilde{\beta}\,\Omega^\dagger+\tilde{\beta}\,\|\xi^\dagger\|\,\|x\|+\tilde{\beta}\la\xi^\dagger,x^\dagger\ra_{X^\ast\times X}\\
&\qquad=\bigl((c_1+\|\xi^\dagger\|)\,\tilde{\beta}-c_1\bigr)\,\|x\|+(\tilde{\beta}-1)\,c_2-\tilde{\beta}\,\Omega^\dagger+\tilde{\beta}\la\xi^\dagger,x^\dagger\ra_{X^\ast\times X}
\end{align*}
The last expression is bounded with respect to $x\in X$ if
\begin{equation*}
\tilde{\beta}\leq\frac{c_1}{c_1+\|\xi^\dagger\|}.
\end{equation*}
Thus, Assumption~\ref{as:E} is true for all $\beta\in(0,\frac{c_1}{c_1+\|\xi^\dagger\|})$.
\end{proof}

\subsection{Multiple solutions in autoconvolution}

In \cite{BueFleHof16} a variational source condition for complex-valued autoconvolution
\begin{equation*}
(F(x))(s)=\int_{\max\{s-1,0\}}^{\min\{s,1\}}x(s-t)\,x(t)\diff t,\quad s\in(0,2),
\end{equation*}
with $X=L^2(0,1)$ and $Y=L^2(0,2)$ has been verified, if the considered exact solutions have a sparse Fourier representation.

If $x^\dagger$ is a solution to \eqref{eq:the}, then $-x^\dagger$ is a second solution and there are only these two solutions.
We choose $\Omega$ in \eqref{eq:tikh} to be the $L^2$-norm.
There is no reason to prefer one of the two solutions and the Tikhonov minimizers will converge (subsequentially) to both solutions.
Thus, for $E^\dagger$ we choose the point-to-set distance
\begin{equation}\label{eq:autoE}
E^\dagger(x)=\dist\bigl(x,\{x^\dagger,-x^\dagger\}\bigr)^2=\min\bigl\{\|x-x^\dagger\|^2,\|x+x^\dagger\|^2\bigr\},\quad x\in X.
\end{equation}

The variational source condition obtained in \cite{BueFleHof16} holds only on a set $M$ which is the union of two small balls around $x^\dagger$ and $-x^\dagger$. The new Theorem~\ref{th:main} yields a variational source condition with $M=X$, but possibly with a different index function $\varphi$.

\begin{corollary}
If $E^\dagger$ is given by \eqref{eq:autoE}, then there are a constant $\beta\in(0,1)$ and a concave index function $\varphi$ such that the variational source condition \eqref{eq:vsc} is satisfied with $M=X$.
\end{corollary}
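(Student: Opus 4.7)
The plan is to verify Assumption~\ref{as:basic} and Assumption~\ref{as:E} with $M=X$ and then invoke Theorem~\ref{th:main}. Assumption~\ref{as:basic} is standard for the autoconvolution setting: the operator $F$ is weakly sequentially closed on $L^2(0,1)$ (this is the setting of \cite{BueFleHof16}), existence of a solution is the starting hypothesis, $\Omega(x)=\|x\|^2$ is convex, and its sublevel sets are closed balls in a Hilbert space, hence weakly compact.

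For Assumption~\ref{as:E} the key observation is the identity
\begin{equation*}
\min\bigl\{\|x-x^\dagger\|^2,\|x+x^\dagger\|^2\bigr\}=\|x\|^2+\|x^\dagger\|^2-2\,|\re\la x,x^\dagger\ra|,
\end{equation*}
obtained by expanding both squared norms and noting $\min\{-2\re\la x,x^\dagger\ra,2\re\la x,x^\dagger\ra\}=-2|\re\la x,x^\dagger\ra|$. This exposes $-E^\dagger$ as a sum of a concave quadratic and a weakly continuous term, which is what makes items (iii) and (iv) tractable even though $E^\dagger$ itself is not convex.

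With this identity item (iii) reduces to weak sequential lower semicontinuity of
\begin{equation*}
x\mapsto-\beta\,E^\dagger(x)+\Omega(x)=(1-\beta)\,\|x\|^2+2\,\beta\,|\re\la x,x^\dagger\ra|-\beta\,\|x^\dagger\|^2,
\end{equation*}
which holds for every $\beta\in(0,1)$: the first summand is weakly l.s.c.\ as the square of the Hilbert norm, and the second summand is weakly continuous since $x\mapsto\re\la x,x^\dagger\ra$ is a continuous real-linear functional and the absolute value is continuous. Items (i) and (ii) are immediate: $M=X$ is weakly closed, and the only solutions are $\pm x^\dagger$, both of which satisfy $E^\dagger(\pm x^\dagger)=0$ and $\Omega(\pm x^\dagger)=\Omega^\dagger$, so the required inequality collapses to $0\leq 0$.

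For item (iv) the analogous expansion gives
\begin{equation*}
\tilde{\beta}\,E^\dagger(x)-\Omega(x)=(\tilde{\beta}-1)\,\|x\|^2-2\,\tilde{\beta}\,|\re\la x,x^\dagger\ra|+\tilde{\beta}\,\|x^\dagger\|^2\leq(\tilde{\beta}-1)\,\|x\|^2+2\,\tilde{\beta}\,\|x^\dagger\|\,\|x\|+\tilde{\beta}\,\|x^\dagger\|^2,
\end{equation*}
which is a downward-opening quadratic in $\|x\|$ whenever $\tilde{\beta}<1$, hence bounded above on $X$. Choosing any $\beta<\tilde{\beta}<1$ fulfills all four items of Assumption~\ref{as:E} simultaneously, and Theorem~\ref{th:main} produces the desired concave index function $\varphi$. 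The only potential obstacle is the nonconvexity of $E^\dagger$, which would normally threaten both (iii) and (iv); the absolute-value identity above is precisely the device that dissolves this difficulty.
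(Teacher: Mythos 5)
Your proposal is correct and follows essentially the same route as the paper: the same expansion $E^\dagger(x)=\|x\|^2+\|x^\dagger\|^2-2\,|\re\la x,x^\dagger\ra|$, the same lower semicontinuity argument for item (iii), and the same Cauchy--Schwarz bound yielding a downward-opening quadratic in $\|x\|$ for item (iv). The paper is merely terser, citing \cite{AnzBueHofSte16} for the weak closedness of $F$ and dismissing items (i) and (ii) as trivial.
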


\begin{proof}
Assumption~\ref{as:basic} is obviously satisfied, see \cite{AnzBueHofSte16} for weak lower semicontinuity of $F$.
Items (i) and (ii) of Assumption~\ref{as:E} are trivially true.
For item (iii) we observe
\begin{equation*}
E^\dagger(x)=\|x\|^2+\|x^\dagger\|^2-2\,|\re\la x,x^\dagger\ra|
\end{equation*}
and therefore
\begin{equation*}
-\beta\,E^\dagger(x)+\Omega(x)
=(1-\beta)\,\|x\|^2-\beta\,\|x^\dagger\|^2+2\,\beta\,|\re\la x,x^\dagger\ra|,
\end{equation*}
which is a lower semicontinuous functional.
Item (iv) follows from
\begin{align*}
\tilde{\beta}\,E^\dagger(x)-\Omega(x)
&=-(1-\tilde{\beta})\,\|x\|^2-2\,\tilde{\beta}\,|\re\la x,x^\dagger\ra|+\tilde{\beta}\,\|x^\dagger\|^2\\
&\leq-(1-\tilde{\beta})\,\|x\|^2+2\,\tilde{\beta}\,\|x\|\,\|x^\dagger\|+\tilde{\beta}\,\|x^\dagger\|^2
\end{align*}
for $\tilde{\beta}\in(\beta,1)$, because the last expression is bounded with respect to \mbox{$x\in X$}.
\end{proof}

In \cite{BueFleHof16} a relatively strong assumption (sparse Fourier representation) was required to obtain a variational source condition and corresponding convergence rates. With the corollary above we now have a variational source condition and rates without additional assumptions.
Although we do not know $\varphi$ explicitly, variational source conditions turn out to be the right tool for convergence rate analysis.

\subsection{$\ell^1$-regularization}

If $X=\ell^1(\bbN)$ and if $\Omega$ in \eqref{eq:tikh} is the $\ell^1$-norm, then Tikhonov regularization is also known as $\ell^1$-regularization.
In \cite{FleGer17} it was shown, that for injective bounded linear operators in \eqref{eq:the} there is always a variational source condition and the index function $\varphi$ can be made explicit. There, the $\ell^1$-norm distance between exact and regularized solutions was used as error functional $E^\dagger$. This norm distance does not coincide with the Bregman distance with respect to the $\ell^1$-norm.

For noninjective bounded linear operators variational source conditions and corresponding convergence rates were derived in \cite{Fle16}, but under additional assumptions. The proofs there are quite technical, but the interested reader easily verifies that Theorem~\ref{th:main} is applicable to the noninjective $\ell^1$-setting, too, if the error functional $E^\dagger$ is the point-to-set distance between some $x\in\ell^1(\bbN)$ and the set of all $\ell^1$-norm minimizing solutions of \eqref{eq:the}. Consequently, even without injectivity we always obtain convergence rates for $\ell^1$-regularization.
We do not provide a proof here, because we would have to go deep into the technicalities of \cite{Fle16}.

\bibliography{vsc}

\begin{thebibliography}{10}

\bibitem{AnzBueHofSte16}
S.~W. Anzengruber, S.~B{\"u}rger, B.~Hofmann, and G.~Steinmeyer.
\newblock {Variational regularization of complex deautoconvolution and phase
  retrieval in ultrashort laser pulse characterization}.
\newblock {\em Inverse Problems}, 32(3):035002 (27pp), 2016.

\bibitem{BotHof10}
R.~I. Bo{\c{t}} and B.~Hofmann.
\newblock {An extension of the variational inequality approach for nonlinear
  ill-posed problems}.
\newblock {\em Journal of Integral Equations and Applications}, 22(3):369--392,
  2010.

\bibitem{BurFleHof13}
M.~Burger, J.~Flemming, and B.~Hofmann.
\newblock Convergence rates in {$\ell\sp 1$}-regularization if the sparsity
  assumption fails.
\newblock {\em Inverse Problems}, 29:025013 (16pp), 2013.

\bibitem{BurOsh04}
M.~Burger and S.~Osher.
\newblock {Convergence rates of convex variational regularization}.
\newblock {\em Inverse Problems}, 20(5):1411--1421, 2004.

\bibitem{BueFleHof16}
S.~B{\"u}rger, J.~Flemming, and B.~Hofmann.
\newblock {On complex-valued deautoconvolution of compactly supported functions
  with sparse Fourier representation}.
\newblock {\em Inverse Problems}, 32(10):104006 (12pp), 2016.

\bibitem{EngHanNeu96}
H.~W. Engl, M.~Hanke, and A.~Neubauer.
\newblock {\em {Regularization of Inverse Problems}}.
\newblock Mathematics and Its Applications. Kluwer Academic Publishers,
  Dordrecht, 1996.

\bibitem{Fle12}
J.~Flemming.
\newblock {\em {Generalized Tikhonov regularization and modern convergence rate
  theory in Banach spaces}}.
\newblock Shaker Verlag, Aachen, 2012.

\bibitem{Fle16}
J.~Flemming.
\newblock {Convergence rates for {$\ell^1$}-regularization without
  injectivity-type assumptions}.
\newblock {\em Inverse Problems}, 32(9):095001 (19pp), 2016.

\bibitem{FleGer17}
J.~Flemming and D.~Gerth.
\newblock {Injectivity and weak*-to-weak continuity suffice for convergence
  rates in $\ell^1$-regularization}.
\newblock {\em Journal of Inverse and Ill-Posed Problems}, 2017.
\newblock Ahead of print, available online, DOI: 10.1515/jiip-2017-0008.

\bibitem{FleHof10}
J.~Flemming and B.~Hofmann.
\newblock {A new approach to source conditions in regularization with general
  residual term}.
\newblock {\em Numerical Functional Analysis and Optimization}, 31(3):254--284,
  2010.

\bibitem{Gra10b}
M.~Grasmair.
\newblock {Generalized Bregman distances and convergence rates for non-convex
  regularization methods}.
\newblock {\em Inverse Problems}, 26(11):115014 (16pp), 2010.

\bibitem{Gra13}
M.~Grasmair.
\newblock {Variational inequalities and higher order convergence rates for
  Tikhonov regularisation on Banach spaces}.
\newblock {\em Journal of Inverse and Ill-Posed Problems}, 21(3):379--394,
  2013.

\bibitem{GraHalSch11}
M.~Grasmair, M.~Haltmeier, and O.~Scherzer.
\newblock {The residual method for regularizing ill-posed problems}.
\newblock {\em Applied Mathematics and Computation}, 218(6):2693--2710, 2011.

\bibitem{HofKalPoeSch07}
B.~Hofmann, B.~Kaltenbacher, C.~P{\"o}schl, and O.~Scherzer.
\newblock {A convergence rates result for Tikhonov regularization in Banach
  spaces with non-smooth operators}.
\newblock {\em Inverse Problems}, 23(3):987--1010, 2007.

\bibitem{HofMat12}
B.~Hofmann and P.~Math{\'e}.
\newblock {Parameter choice in Banach space regularization under variational
  inequalities}.
\newblock {\em Inverse Problems}, 28:104006 (17pp), 2012.

\bibitem{HohWei17}
T.~Hohage and F.~Weidling.
\newblock Characterizations of variational source conditions, converse results,
  and maxisets of spectral regularization methods.
\newblock {\em SIAM Journal on Numerical Analysis}, 55(2):598--620, 2017.

\bibitem{HohWer13}
T.~Hohage and F.~Werner.
\newblock {Iteratively regularized Newton methods with general data misfit
  functionals and applications to Poisson data}.
\newblock {\em Numerische Mathematik}, 123(4):745--779, 2013.

\bibitem{KoeHohWer16}
C.~K{\"o}nig, T.~Hohage, and F.~Werner.
\newblock {Convergence rates for exponentially ill-posed inverse problems with
  impulsive noise}.
\newblock {\em SIAM Journal on Numerical Analysis}, 54(1):341--360, 2016.

\bibitem{MatHof08}
P.~Math{\'{e}} and B.~Hofmann.
\newblock {How general are general source conditions?}
\newblock {\em Inverse Problems}, 24(1):015009 (5pp), 2008.

\bibitem{MatPer03}
P.~Math{\'{e}} and S.~V. Pereverzev.
\newblock {Geometry of linear ill-posed problems in variable Hilbert scales}.
\newblock {\em Inverse Problems}, 19(3):789--803, 2003.

\bibitem{SchGraGroHalLen09}
O.~Scherzer, M.~Grasmair, H.~Grossauer, M.~Haltmeier, and F.~Lenzen.
\newblock {\em {Variational Methods in Imaging}}.
\newblock Number 167 in Applied Mathematical Sciences. Springer, New York,
  2009.

\bibitem{SchKalHofKaz12}
T.~Schuster, B.~Kaltenbacher, B.~Hofmann, and K.~S. Kazimierski.
\newblock {\em {Regularization Methods in Banach Spaces}}, volume~10 of {\em
  Radon Series on Computational and Applied Mathematics}.
\newblock De Gruyter, Berlin/Boston, 2012.

\bibitem{Zal02}
C.~Z{\u{a}}linescu.
\newblock {\em {Convex Analysis in General Vector Spaces}}.
\newblock World Scientific, River Edge, 2002.

\end{thebibliography}

\end{document}